\newcommand{\GL}{\operatorname{GL}}
\newcommand{\gl}{\mathfrak{gl}} 
\newcommand{\SL}{\operatorname{SL}}
\newcommand{\sL}{\mathfrak{sl}} 
\newcommand{\SO}{\operatorname{SO}}
\newcommand{\imod}[1]{\allowbreak\mkern4mu({\operator@font mod}\,\,#1)}
\newtheorem*{conj*}{Conjecture}
\newtheorem{thm}{Theorem}[section] 
\newtheorem{lem}[thm]{Lemma}
\newtheorem{cor}[thm]{Corollary}
\theoremstyle{definition}
\begin{document}

\title{Closures of Certain Matrix Varieties and Applications}

\author{William Chang}
\address{Department of Mathematics,  UCLA, 520 Portola Plaza, Los Angeles, CA 90095, USA}
\email{chang314@g.ucla.edu}

\author{Robert M. Guralnick}
\address{R.M. Guralnick, Department of Mathematics, University of Southern California, Los Angeles, CA 90089-2532, USA}
\email{guralnic@usc.edu}

\dedicatory{Dedicated to the memory of  Irina Suprunenko}  

\begin{abstract}  We prove some results about closures of certain matrix varieties consisting of elements with the
same centralizer dimension.  This generalizes a result of Dixmier and has applications to topological generation of
simple algebraic groups.  
 \end{abstract}

 \keywords{Matrix varieties, Grassmanians, generation of algebraic groups, Dixmier} 
 
 \subjclass[2020]{Primary:  14L35,  20G15; secondary  15A04}  

\date{\today}

\maketitle

\section{Introduction}\label{s:intro}

Let $k$ be an algebraically closed field of characteristic $p \ge 0$ and let $M_n(k)$ denote the algebra of $n \times n$ matrices over $k$.
 If $A \in M_n(k)$, we let 
$D(A)$ denote the subset of $M_n(k)$ consisting of all elements with the same Jordan canonical
form as $A$ up to changing the eigenvalues (but with the same number of distinct eigenvalues).

Note that $D(A)$ is closed  under conjugation by $\GL_n(k)$ and any two elements in $D(A)$ have
conjugate centralizers.   More generally, the fixed space of any two elements in $D(A)$ on the $d$th Grassmanian, $\mathcal{G}_d$
are also conjugate and in particular the dimension of the fixed space on $\mathcal{G}_d$ is constant on $D(A)$.

We generalize a result of Dixmier \cite{D} (for the case of type A).    Let $G$ be a simple algebraic group over an algebraically closed field.
A unipotent element $u \in G$ is called parabolic if there exists a parabolic subgroup $P$ such that $C_P(u)$ is an open dense subset of the
unipotent radical of $P$.   One similarly defines parabolic nilpotent elements in the Lie algebra of $G$.   Diximier  proved (in characteristic $0$)
that if $A$ is parabolic, then  $A$ is the limit of semisimple elements such that the dimensions of the  centralizers of each of the semisimple elements is the same as the dimension of
the centralizer of $A$.   This was used by Richardson \cite{Ri} to prove that the commuting variety of a reductive Lie algebra $\mathfrak{g}$ in characteristic $0$
is an irreducible variety of dimension equal to $\dim \mathfrak{g} + \mathrm{rank}(\mathfrak{g})$ (this was proved for $\gl_n$ by Motzkin and Taussky \cite{MT}
in all characteristics).    Levy \cite{Le} observed that Dixmier's result goes through in good characteristic as well and so Richardson's proof for the
irreducibility of the variety of commuting pairs goes through in good characteristic as well. 

We generalize Dixmier's result for $\gl_n$ by obtaining the same conclusion in arbitrary characteristic (using the Zariski topology)  and for arbitrary elements.   
 We also prove a sandwich result by showing that any such element is trapped between semisimple
and equipotent elements (i.e. having a single eigenvalue) all with the same centralizer dimension.  
We also need to verify that our semisimple and nilpotent elements have some extra properties (required for applications).
We combine this with a recent
result of Guralnick and Lawther \cite[Prop. 3.2.1]{GL} to conclude that all the fixed point spaces on all Grassmanians are the same
for elements in these subvarieties.   

This will be used in \cite{GG1} to deduce some results related to topological generation of simple algebraic groups   and
to extend the results of \cite{BGG} from semisimple and unipotent classes to all classes.   

Our main result is the following:

\begin{thm} \label{t:dixmier}   Let $A \in M_n(K)$.   Then there exist a semsimple element $S \in M_n(k)$ and an equipotent element $N \in M_n(k)$
such that the following hold:
\begin{enumerate}
\item  The Zariski closure of $D(S)$ contains $D(A)$;
\item  The Zariski closure of $D(A)$ contains $D(N)$;
\item  $A,S$ and $N$ all have centralizers of the same dimension; and
\item  If $1 \le d < n$,  then $A, S$ and $N$ all have fixed point spaces of the
same dimension on $\mathcal{G}_d$.
\end{enumerate}
\end{thm}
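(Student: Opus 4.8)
The plan is to read off the Jordan data of $A$, build $S$ and $N$ explicitly from it, and then realize the required closure relations through one combinatorial degeneration lemma. Say $A$ has $r$ distinct eigenvalues $\lambda_1,\dots,\lambda_r$, and for the $i$-th of these the multiset of Jordan block sizes is a partition $\mu^{(i)}$ with $\sum_i|\mu^{(i)}|=n$; write $\nu^{(i)}=(\mu^{(i)})'$ for the conjugate partition. I would take $S$ to be any diagonal matrix whose multiset of eigenvalue multiplicities is $\{\nu^{(i)}_b:1\le i\le r,\ 1\le b\le\mu^{(i)}_1\}$, i.e. all the column lengths of all the diagrams, and $N$ a nilpotent matrix of Jordan type $\sigma:=\sum_i\mu^{(i)}$ (the componentwise sum of the partitions, which is again a partition of $n$). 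Then $S$ is semisimple and $N$ is equipotent, and (3) reduces to the identity $\dim C(J_\tau)=\sum_\ell(2\ell-1)\tau_\ell=\sum_j(\tau'_j)^2$ for a nilpotent $J_\tau$ of type $\tau$: summing over the eigenvalues of $A$, $\dim C(A)=\sum_i\sum_\ell(2\ell-1)\mu^{(i)}_\ell=\sum_\ell(2\ell-1)\sigma_\ell=\dim C(N)$ and $\dim C(A)=\sum_{i,b}\bigl(\nu^{(i)}_b\bigr)^2=\dim C(S)$.

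The engine is the following lemma, which I would prove by an explicit ``triangular'' $1$-parameter family. Let $\pi$ be a partition, let $\pi=\sum_{j=1}^q\tau^{(j)}$ be any expression of $\pi$ as a componentwise sum of partitions, and let $t_1,\dots,t_q\in k$ be distinct. Index a basis of $k^{|\pi|}$ by the boxes of the diagram of $\pi$; let $J_\pi$ be the left shift along rows, and let $\Delta$ be the diagonal operator scaling by $t_j$ each box in the $j$-th horizontal strip of its row (in row $\ell$: the first $\tau^{(1)}_\ell$ boxes form strip $1$, the next $\tau^{(2)}_\ell$ form strip $2$, and so on). Put $Z_\epsilon:=J_\pi+\epsilon\Delta$. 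Then $Z_0=J_\pi$, and for every $\epsilon\neq0$ one has $Z_\epsilon\cong\bigoplus_{j=1}^q\bigl(\epsilon t_jI+J_{\tau^{(j)}}\bigr)$. Indeed $Z_\epsilon$ stabilizes the flag $0\subset W_{\le1}\subset\cdots\subset W_{\le q}=k^{|\pi|}$, with $W_{\le j}$ spanned by the first $j$ strips of every row; on the $j$-th graded piece $Z_\epsilon$ acts as $\epsilon t_jI+J_{\tau^{(j)}}$; and once $\epsilon\neq0$ the generalized eigenvalues $\epsilon t_1,\dots,\epsilon t_q$ of the successive graded pieces are pairwise distinct, so the filtered module is the direct sum of its graded pieces.

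Now (2) follows by taking $q=r$, $\tau^{(i)}=\mu^{(i)}$, so $\pi=\sigma$: for $\epsilon\neq0$, $Z_\epsilon$ has $r$ distinct eigenvalues $\epsilon t_1,\dots,\epsilon t_r$ with Jordan types $\mu^{(1)},\dots,\mu^{(r)}$, so $Z_\epsilon\in D(A)$, while $Z_0=J_\sigma$; hence $J_\sigma\in\overline{D(A)}$, and since $\overline{D(A)}$ is stable under $X\mapsto X+\lambda I$ and under $\GL_n$-conjugation it contains all of $D(N)$. For (1), I would apply the lemma inside the $i$-th generalized eigenspace of $A$, with $\pi=\mu^{(i)}$, its decomposition into columns $\mu^{(i)}=\sum_b\bigl(1^{\nu^{(i)}_b}\bigr)$, and distinct scalars $t_{i,b}$, and then add back $\lambda_i$; since every column piece $J_{(1^{\nu^{(i)}_b})}$ is zero, the resulting glued family $\bigoplus_i(\lambda_iI+Z^{(i)}_\epsilon)$ is, for all but finitely many $\epsilon$, a diagonalizable matrix with eigenvalues $\lambda_i+\epsilon t_{i,b}$ of multiplicities $\nu^{(i)}_b$ --- i.e. an element of $D(S)$ --- whose value at $\epsilon=0$ is the representative $\bigoplus_i(\lambda_iI+J_{\mu^{(i)}})$ of an arbitrary orbit of $D(A)$; as $\overline{D(S)}$ is $\GL_n$-stable this gives $D(A)\subseteq\overline{D(S)}$. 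For (4): $X\mapsto\dim\mathcal G_d^X$ is $\GL_n$-invariant, hence constant on each of $D(A),D(S),D(N)$, and upper semicontinuous on $M_n(k)$ (the locus where $\dim\mathcal G_d^X\ge m$ is closed), so (1)--(2) give $\dim\mathcal G_d^S\le\dim\mathcal G_d^A\le\dim\mathcal G_d^N$; the reverse inequalities come from (3) together with \cite[Prop.~3.2.1]{GL}, forcing equality.

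The step I expect to be the real obstacle is conceptual: choosing the correct $S$ and $N$ and realizing that the degeneration must carry out a genuine $\epsilon$-dependent conjugation --- merely colliding the eigenvalues of a fixed direct sum yields a strictly larger centralizer, so the naive limit is useless. The triangular family $J_\pi+\epsilon\Delta$ is precisely the device performing this conjugation while staying polynomial in $\epsilon$, which is what makes the whole argument valid in the Zariski topology over an arbitrary algebraically closed field; what remains after that is combinatorial bookkeeping (strips, column lengths, and the partition $\sigma=\sum_i\mu^{(i)}$), including the check that the coincidences among the eigenvalues $\lambda_i+\epsilon t_{i,b}$ occur for only finitely many $\epsilon$.
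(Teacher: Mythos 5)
Your proposal is correct and follows essentially the same route as the paper: you choose the same $S$ (semisimple with eigenvalue multiplicities $\Gamma'=(\Sigma\Delta)'$) and $N$ (equipotent of type $\Gamma=\Sigma\Delta$), and your one-parameter family $J_\pi+\epsilon\Delta$ is just a slice of the paper's upper-triangular families with nonzero superdiagonal (Lemma \ref{l:basic}), with the filtration-by-strips argument playing the role of the cyclicity/regularity argument there, and with the same centralizer computation and the same use of \cite[Prop.~3.2.1]{GL} plus semicontinuity for (4). The only cosmetic differences are that you spell out the semicontinuity and the $\GL_n$/translation reductions explicitly, which the paper leaves implicit.
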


Note that in our proof controlling the determinant or trace of the elements is not an issue and so the same results hold for 
$\sL_n(k)$, $\GL_n(k)$ and $\SL_n(k)$.  We make some remarks in the last section regarding the symplectic and orthogonal cases. 

We also give some consequences regarding generation of simple algebraic groups that will be required in the sequel \cite{GG1}.
(3) was observed in \cite{GG} for the special case when $A$ is semisimple  (i.e. the existence of a nilpotent class with the
same centralizer dimension and with the largest eigenspace of the same dimension) and was  used to prove results about generic stabilizers. 
We give the proof in the next section and some applications in the following section.   

The first author thanks USC for support by a Provost's Undergraduate Research Grant.  The second author  was partially support by 
the NSF grant  DMS-1901595 and a Simons Foundation Fellowship 609771.  We thank the referee for their useful comments and
careful reading of the manuscript. 

\section{Proofs} 

  If $A \in M_n(k)$ has $m$ distinct eigenvalues, we set $\Delta(A)$ to be the set of $m$ partitions
$\Delta_i$ where $\Delta_i$ is the partition associated to the Jordan blocks of each
generalized eigenspace of $A$.

Let $X(\Delta) =\{ A \in M_n(k) | \Delta(A)=\Delta\}$ for $\Delta$ a set of $m$ partitions
whose sizes add up to $n$.   Note that if $A,B \in X(\Delta)$, then the centralizers of
$A$ and $B$ are conjugate and in particular have the same dimension.

Let $\Sigma\Delta = \sum \Delta_i$ where by the sum of partitons we mean the usual addition
(just view a partition as row vector with nonincreasing entries -- adding $0$'s to make the vectors have
the same length).        Given a partition $\Gamma$, let $\Gamma'$ be the transpose partition.

If $\Gamma$ is a partition of $n$,  let $U(\Gamma)$ be set of all matrices with a single eigenvalue
and the sizes of the Jordan blocks be given by $\Gamma$ and let $S(\Gamma)$ be the set
of semisimple matrices with the dimensions of the eigenspaces given by $\Gamma$. 

We first note the following elementary result.

\begin{lem} \label{l:basic}  Let $A$ be an upper triangular matrix with diagonal entries
contained in the set $\{a_1, \ldots a_s\}$ with the multiplicity of $a_i$ equal to $d_i$.
Assume that the entries in the $i, i+1$ positions are all nonzero.   Then  the Jordan canonical
form for $A$ consists of one Jordan block for each $a_i$ and it has size $d_i$.
\end{lem}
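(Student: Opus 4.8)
The plan is to show that $A$ is non-derogatory, i.e.\ that each of its eigenvalues gives rise to exactly one Jordan block; the size of that block is then forced by the multiplicity. Since $A$ is upper triangular, its characteristic polynomial is $\prod_{i=1}^{s}(x-a_i)^{d_i}$, so each $a_i$ is an eigenvalue of $A$ of algebraic multiplicity $d_i$, and the generalized $a_i$-eigenspace of $A$ has dimension $d_i$. As the number of Jordan blocks of $A$ with eigenvalue $a_i$ equals $\dim\ker(A-a_iI)$, and the sizes of these blocks sum to $d_i$, it suffices to prove that $\dim\ker(A-a_iI)=1$ for every $i$: there will then be a single block with eigenvalue $a_i$, necessarily of size $d_i$.

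Fix $i$ and set $N=A-a_iI$. Then $N$ is upper triangular, it has the same (nonzero) entries as $A$ in the positions $(j,j+1)$, and its $j$th diagonal entry is $0$ exactly when the $j$th diagonal entry of $A$ equals $a_i$. Using that the $d_i$ occurrences of $a_i$ along the diagonal of $A$ are consecutive, say in positions $r+1,\dots,r+d_i$ with $r=\sum_{j<i}d_j$, I would write $N$ in the block form
\[
N=\begin{pmatrix} P & \ast & \ast \\ 0 & Z & \ast \\ 0 & 0 & Q \end{pmatrix},
\]
where $P$ has size $r$, $Z$ has size $d_i$, and $Q$ has size $n-r-d_i$. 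Every diagonal entry of $P$ and of $Q$ is of the form $a_j-a_i$ with $a_j\neq a_i$, hence nonzero, so $P$ and $Q$ are invertible; and $Z$ is strictly upper triangular and inherits from $N$ the property that all its superdiagonal entries are nonzero. Solving $Nv=0$ blockwise from the bottom: invertibility of $Q$ forces the $Q$-component of $v$ to vanish; the middle block rows then reduce to $Zy=0$, where $y$ denotes the $Z$-component of $v$; and the top block rows then determine the $P$-component of $v$ uniquely in terms of $y$, using invertibility of $P$. Therefore $\ker N\cong\ker Z$.

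It remains to see that $\dim\ker Z=1$. Because $Z$ has all superdiagonal entries nonzero, its $j$th column (for $2\leq j\leq d_i$) is supported in rows $1,\dots,j-1$ and has a nonzero entry in row $j-1$; hence columns $2,\dots,d_i$ of $Z$ have pairwise distinct lowest nonzero rows, so they are linearly independent and $\operatorname{rank}Z\geq d_i-1$. Since $Z$ is nilpotent it is singular, so in fact $\operatorname{rank}Z=d_i-1$ and $\dim\ker Z=1$ (the case $d_i=1$ being trivial). Combining this with the previous paragraphs gives $\dim\ker(A-a_iI)=1$ for all $i$, completing the argument.

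The only step that requires real care is the block decomposition in the second paragraph: it is precisely the contiguity of the $d_i$ copies of $a_i$ on the diagonal that makes the flanking blocks $P$ and $Q$ invertible and reduces the problem to a single strictly upper triangular block with nonzero superdiagonal, and some hypothesis of this kind on the arrangement of the diagonal is genuinely needed. With that reduction in hand everything else is elementary; alternatively, one can avoid introducing $Z$ and instead check directly that deleting row $r+d_i$ and column $r+1$ from $N$ leaves a matrix which, in the induced ordering of rows and columns, is triangular with nonzero diagonal and hence invertible, so that $\operatorname{rank}N=n-1$.
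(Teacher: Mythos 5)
Your argument is correct, and it reaches the lemma by a genuinely different route than the paper. The paper's proof is a two-line regularity argument: it asserts that $e_n=(0,\dots,0,1)^T$ is a cyclic vector for $k[A]$, so that $A$ is nonderogatory, whence the minimal polynomial equals the characteristic polynomial $\prod_i(x-a_i)^{d_i}$ and there is exactly one Jordan block, of size $d_i$, per eigenvalue. You instead show directly that each eigenvalue has geometric multiplicity one, via the block decomposition into $P,Z,Q$ and the rank count for the strictly upper triangular block $Z$. Two remarks on the comparison. First, you are right that contiguity of the equal diagonal entries must be assumed (it is implicit in the paper, whose matrices $A(a_1,\dots,a_s;e_1,\dots,e_s)$ have each eigenvalue occupying a consecutive diagonal block): with diagonal $(0,1,0)$, superdiagonal entries $1$ and $(1,3)$ entry $1$, the matrix is diagonalizable, so the conclusion genuinely fails without some such hypothesis. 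Second, your route is more robust in the lemma's stated generality: when entries above the superdiagonal are arbitrary, $e_n$ need not be cyclic --- for $A=\left(\begin{smallmatrix}0&1&1\\0&1&1\\0&0&1\end{smallmatrix}\right)$ one has $A^2e_3=2Ae_3-e_3$, so $k[A]e_3$ is only $2$-dimensional, even though $A$ is regular and the lemma's conclusion holds --- so the paper's cyclicity claim really leans on the bidiagonal shape of the matrices to which the lemma is later applied (where cyclicity of $e_n$ is an immediate induction). In short, the paper's argument buys brevity in the bidiagonal setting it needs; yours buys an elementary proof valid for arbitrary upper-triangular entries, with the required hypothesis on the arrangement of the diagonal made explicit.
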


\begin{proof} Note that $A$ is regular (i.e. its centralizer has dimension $n$).   This follows
by noting $A$ is cyclic (i.e. the column vector $e_n = (0,0, \ldots, 1)$ generate the module
of column vectors for the algebra $k[A]$).   Thus, the characteristic and minimal polynomials
of $A$ are both $\Pi_{i=1}^s (x - a_i)^{d_i}$ and the result follows. 
\end{proof}  

Let   $A(a_1, \ldots, a_s; e_1, \ldots, e_s)$ denote the matrix above with the entries $i, i+1$ all equal to $1$
and all other entries (besides the diagonal) $0$.   Consider the $s$-dimensional affine space of all such matrices (as the $a_i$ range over all possibilities).
Note that the 
 generic points  in the variety of all such matrices (i.e. with $a_i \ne a_j$ for $i \ne j$) are all  regular.    If all the $a_i=a$, then
 the matrix is a regular equipotent matrix.

If $A \in M_n(k)$, let $\mathcal{G}_d^A$ be the fixed space of $A$ on $\mathcal{G}_d$ 
(i.e. the set of $d$-dimensional subspaces $W$ of $k^n$ with $AW \subseteq W$).

Our first result is the following:

\begin{thm} \label{t:unipotent}   Let $\Delta$ be as above.   Let $\Gamma = \Sigma\Delta$. 
\begin{enumerate}
\item The closure of $S(\Gamma')$ contains $X(\Delta)$.
\item   The closure of $X(\Delta)$ contains  $U(\Gamma)$.
\item  If $A \in S(\Gamma') \cup X(\Delta) \cup U(\Gamma)$, then
$\dim C(A) = \sum d_i^2$ where $\Gamma'$ is the partition $d_1 \ge d_2 \ge \ldots$.
\item   If $A \in S(\Gamma') \cup X(\Delta) \cup U(\Gamma)$, then $\dim \mathcal{G}_d^A$
is constant. 
\end{enumerate}
\end{thm}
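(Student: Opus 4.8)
The plan is to prove the four assertions in order, the first being the crux. For (1), I would show that a generic semisimple element with eigenspace dimensions $\Gamma'$ lies in the closure of $X(\Delta)$'s ``parent'' family and conversely — more precisely, I want to exhibit $X(\Delta)$ inside $\overline{S(\Gamma')}$. The key device is the $s$-parameter family of matrices $A(a_1,\dots,a_s;1,\dots,1)$ introduced just before the statement, but iterated block-wise: for each distinct eigenvalue of a target element in $X(\Delta)$ I build a companion-type upper triangular block whose subdiagonal $1$'s force, by Lemma~\ref{l:basic}, a single Jordan block of the prescribed size. Running a one-parameter degeneration $t \mapsto$ (block matrix with the $a_i$'s collapsing or spreading, and the $1$'s scaled by $t$) shows that each such regular element specializes to something in $X(\Delta)$ and, going the other way, deforms to a regular semisimple element. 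The partition bookkeeping is exactly the statement $\Gamma = \Sigma\Delta$: stacking the Jordan blocks of the $m$ generalized eigenspaces of $A$ and reading off eigenspace multiplicities after a generic semisimplification gives column sums of the $\Delta_i$, i.e.\ $\Gamma = \Sigma\Delta$, and the eigenspace-dimension partition of the semisimple limit is the transpose $\Gamma'$.

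For (2), the inclusion $U(\Gamma) \subseteq \overline{X(\Delta)}$ goes the opposite direction: starting from an element of $X(\Delta)$ I scale the generalized eigenvalues together (a one-parameter family $a_i(t) \to a$) so that in the limit all eigenvalues coincide; the Jordan type in the limit is governed by the nilpotent parts, and I need the limiting nilpotent Jordan type to be $\Gamma$ rather than something more degenerate. Here I would again use the regular representatives $A(a_1,\dots,a_s;1,\dots,1)$ (or a block version) so that along the degeneration the rank of each power $(A - \text{scalar})^j$ is controlled and the limit is forced to have Jordan type exactly $\Gamma$ (using upper-semicontinuity of the centralizer dimension plus an explicit witness hitting the right type). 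The point is that a regular element with $s$ distinct eigenvalues of multiplicities $d_i$ degenerates to the regular equipotent element — a single Jordan block of size $n$ — but doing this ``within $X(\Delta)$'' rather than from a regular element requires the block construction so that each generalized eigenspace contributes its full partition $\Delta_i$ and these add to $\Gamma$.

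For (3), the centralizer dimensions are computed directly: on $X(\Delta)$ it is $\sum_i \dim C(\text{Jordan block data }\Delta_i) = \sum_i \sum_j (\Delta_i'{}_j)^2$; on $U(\Gamma)$ it is $\sum_j (\Gamma'_j)^2$; on $S(\Gamma')$ it is $\sum_i d_i^2$ where $\Gamma'$ has parts $d_i$. The identity tying these together is the classical fact that for a fixed total (here $\Gamma = \Sigma\Delta$), the quantity $\sum_j (\text{column sum})_j^2$ equals $\sum (d_i)^2$ with $d_i$ the parts of $\Gamma'$, and that splitting a partition into a direct sum of sub-partitions $\Delta_i$ whose column sums add to $\Gamma$ leaves $\sum \|\Delta_i'\|^2$ unchanged only when... — actually the cleanest route is to invoke upper-semicontinuity of $A \mapsto \dim C(A)$ together with (1) and (2): all three centralizer dimensions are squeezed equal because $S(\Gamma')$ is in the closure of $X(\Delta)$ which is in the closure of $U(\Gamma)$ is false in that direction, so instead I use that $\dim C$ is constant on each stratum $S(\Gamma')$, $X(\Delta)$, $U(\Gamma)$ and lower-semicontinuous passing to the closure, forcing $\dim C(S) \le \dim C(A) \le \dim C(N)$; then a direct computation of $\dim C(S(\Gamma')) = \sum d_i^2$ and $\dim C(U(\Gamma)) = \sum d_i^2$ shows both ends agree, pinning $\dim C(A)$ in between. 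Finally (4) is immediate from the same semicontinuity philosophy applied to the functor $\mathcal{G}_d$: the fixed-point-space dimension $\dim \mathcal{G}_d^A$ is an upper-semicontinuous function of $A$ (fixed points of a flat family degenerate to at-least-as-large fixed loci in the limit), it is constant on each stratum since the centralizers are conjugate, and combining the two closure inclusions from (1) and (2) with the agreement of a single invariant at both ends — here I would invoke \cite[Prop.~3.2.1]{GL} to identify $\dim \mathcal{G}_d^A$ purely in terms of the partition data $\Delta$ (or $\Gamma$, $\Gamma'$), which comes out equal for $S(\Gamma')$, $X(\Delta)$ and $U(\Gamma)$ — forces equality throughout.

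The main obstacle is part (1): producing an \emph{explicit} family degenerating a regular (or block-regular) element of $X(\Delta)$ to a semisimple element with the right eigenspace partition $\Gamma'$, and checking that the Jordan type is preserved along the way. The block construction via $A(a_1,\dots,a_s;1,\dots,1)$ and Lemma~\ref{l:basic} is designed to make this tractable, but verifying that no extra degeneration occurs — i.e.\ that the ranks of all relevant powers are exactly what $\Sigma\Delta$ predicts, and not smaller in the limit — will require the careful rank computation, which is the one place I would actually need to grind.
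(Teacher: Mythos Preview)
Your approach is essentially the same as the paper's: the block-diagonal family built from the pieces $A(a_1,\dots,a_{m_i};1,\dots,1)$, Lemma~\ref{l:basic} to read off the Jordan type, the direct computation of $\dim C$ at the two semisimple/equipotent endpoints combined with semicontinuity and the closure inclusions from (1)--(2) to pin down (3), and the appeal to \cite[Prop.~3.2.1]{GL} at the endpoints plus the same sandwich for (4). One small slip: in your final paragraph you speak of ``degenerating a regular element of $X(\Delta)$ to a semisimple element,'' but the direction is the reverse (the semisimple $S(\Gamma')$ elements are the generic points and the $X(\Delta)$ element is the limit as the $a_i$ collapse); you had this right earlier, and the scaling of the superdiagonal $1$'s by $t$ is unnecessary --- the paper simply varies the $a_i$ over affine space.
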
  

\begin{proof}   We prove (1). 
First suppose that $m=1$.  Then $X(\Delta)$ consists of elements with a single eigenvalue.
Then $\Gamma=\Delta$.

Let $m_1 \ge m_2 \ge \ldots \ge m_s$ be the pieces of the partition of $\Gamma$.    An element of 
$S(\Gamma')$ has has $m_1$ distinct eigenvalues and more generally has $m_j$ distinct eigenvalues
having multiplicity at least $j$.     Set $d=m_1$.

For any $a_1, \ldots, a_d \in k$,   let
  $B$ be the matrix with diagonal blocks of size $m_i$   with the $i$th diagonal block being $A(a_1, \ldots, a_{m_i}, 1, \ldots, 1)$. 
  
  Note that if the $a_i$ are distinct, then $B$ is semisimple and is in $S(\Gamma')$ and this is a generic point in the affine
  space of dimension $m$ obtained by allowing all possibilities for $a_i$.   In the closure are the elements when all $a_i$ are equal
  and so $X(\Gamma)$ is in the closure of $S(\Gamma')$ as claimed.
  
  In the general case,  we just choose  a $B_j$ as above corresponding to the Jordan blocks corresponding to the $j$th eigenvalue
  of an element in $X(\Delta)$ giving another copy of affine space.  The result follows by considering each block separately.

We prove (2) similarly.   Let $A \in X(\Delta)$.   First consider the case that 
each partition in $\Delta$ has just one part.   By taking generic elements and taking
elements in the closure with a single eigenvalue, we see we can obtain a regular element
with a single Jordan block (and any eigenvalue).   In general, we decompose $A$  into pieces
corresponding to $\Sigma\Delta=\Gamma$ with the $j$th piece having a single Jordan block corresponding
to each eigenvalue of size the $j$th part of $\Gamma$ and the closure contains elements with a single eigenvalue
with the partition of Jordan blocks corresponding to $\Gamma$.

The formulas for the dimension of centralizers and semisimple elements and nilpotent elements give
that the centralizers have the dimension for elements in $S(\Gamma')$ and $U(\Gamma)$.  This observation
together with (1) and (2) proves (3). 

The equality of the dimension of the fixed spaces on Grassmanian follows for the unipotent and semisimple elements
by \cite[Prop. 3.2.1]{GL}.   Then by (1) and (2),  (4) follows. 
\end{proof}

Clearly, the result (with essentially the same proof) holds with $M_n(k)$ replaced by either $\GL_n(k)$ or $\SL_n(k)$.

\section{An Application} 

We now apply Theorem \ref{t:unipotent}  to the action on Grassmanians.   
Recall that $A \in M_n(k)$ fixes a subspace $W$ means that $AW \subseteq W$.

Let $d(A)$ be the dimension of the largest eigenspace of $A$.  Observe that if $\Gamma=\Sigma\Delta$ with $\Delta=\Delta(A)$,
then $d(A) = d(B) = d(C)$ by Theorem \ref{t:unipotent}  applied to $\mathcal{G}_1$ (or by observation).

We now generalize the result \cite[Lemma 3.35]{BGG} which was stated for unipotent or semisimple elements.

\begin{thm} \label{t:sum}   Let  $A_1, \ldots, A_s \in  M_n(k)$.  Assume that $\sum d(A_i) \le (s-1)n$.
Then one of the following holds:
\begin{enumerate}
\item  Each $A_i$ has a quadratic minimal polynomial and $s=2$; or
\item  $\sum_i \dim \mathcal{G}_e^{A_i} < (s-1) \dim \mathcal{G}_e = (s-1)e(n-e)$ for $1 \le e  \le n/2$.
\end{enumerate}
\end{thm}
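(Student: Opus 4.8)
The plan is to reduce to the analogous statement about semisimple elements, where the inequality in (2) should already be available from \cite[Lemma 3.35]{BGG}, and then transport it along the closure relations provided by Theorem~\ref{t:unipotent}. Concretely, for each $i$ set $\Delta_i := \Delta(A_i)$, $\Gamma_i := \Sigma\Delta_i$, and let $S_i \in S(\Gamma_i')$ be a semisimple element whose closure data is governed by Theorem~\ref{t:unipotent}. By part (4) of that theorem, $\dim \mathcal{G}_e^{A_i} = \dim \mathcal{G}_e^{S_i}$ for all $e$, and by the observation preceding this theorem, $d(A_i) = d(S_i)$. Hence the hypothesis $\sum_i d(A_i) \le (s-1)n$ holds verbatim for the $S_i$, and the conclusion for the $A_i$ is equivalent to the conclusion for the $S_i$. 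Note also that $A_i$ has a quadratic minimal polynomial if and only if every part of $\Gamma_i$ is at most $2$, i.e.\ $\Gamma_i'$ has at most two rows, which is precisely the condition that $S_i$ has a quadratic minimal polynomial; so alternative (1) is likewise preserved under the passage $A_i \rightsquigarrow S_i$.

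First I would carry out this reduction carefully, checking that the translation of each of the two alternatives is genuinely an equivalence and not merely an implication in one direction. Second, I would invoke \cite[Lemma 3.35]{BGG} for the semisimple tuple $(S_1, \ldots, S_s)$: since each $S_i$ is semisimple and $\sum_i d(S_i) \le (s-1)n$, that lemma yields that either each $S_i$ has quadratic minimal polynomial with $s = 2$, or $\sum_i \dim \mathcal{G}_e^{S_i} < (s-1)e(n-e)$ for $1 \le e \le n/2$. Third, I would translate this conclusion back to the $A_i$ using the two equalities $\dim \mathcal{G}_e^{A_i} = \dim \mathcal{G}_e^{S_i}$ and ``quadratic minimal polynomial'' $\Leftrightarrow$ ``quadratic minimal polynomial'', completing the proof. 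If for some reason the semisimple case is not directly citable in the exact form needed, the fallback is to prove it from scratch: for semisimple $S_i$ with eigenspace dimensions given by a partition, $\dim \mathcal{G}_e^{S_i}$ is a sum of products of binomial-type Grassmannian dimensions over the eigenspaces, and one estimates $\sum_i \dim \mathcal{G}_e^{S_i}$ against $(s-1)e(n-e)$ by a direct combinatorial argument, with the quadratic-minimal-polynomial case (partition into parts of size $\le 2$) being exactly the boundary case where equality can occur.

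The main obstacle I expect is the bookkeeping in the reduction step: one must be sure that the invariants used — $d(A_i)$, the $\dim \mathcal{G}_e^{A_i}$, and the degree of the minimal polynomial — are \emph{all} simultaneously determined by $\Delta(A_i)$ in a way compatible with the semisimple representative $S_i$ produced by Theorem~\ref{t:unipotent}, and in particular that the ``$s=2$'' clause and the strict inequality in (2) match up on the nose after translation (no off-by-one in the range $1 \le e \le n/2$, no loss of strictness). Once that is pinned down, the argument is essentially a citation plus a dictionary; the only real mathematical content beyond Theorem~\ref{t:unipotent} lives in the semisimple case, which is either imported from \cite{BGG} or handled by the elementary estimate sketched above.
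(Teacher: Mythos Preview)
Your proposal is correct and follows essentially the same route as the paper: reduce to the semisimple (or unipotent) case via Theorem~\ref{t:unipotent}(4) and the observation $d(A_i)=d(S_i)$, then cite \cite[Lemma~3.35]{BGG}. The paper's proof is terser (it also first shifts each $A_i$ by a scalar to make it invertible, since \cite{BGG} works in $\GL_n$), but your explicit check that the quadratic-minimal-polynomial condition transfers to $S_i$ is exactly the bookkeeping the paper leaves implicit.
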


\begin{proof}   There is no harm is adding a scalar to each $A_i$ and so we may assume that the 
$A_i$ are all invertible.  This is proved in \cite[Lemma 3.35]{BGG} in the case each  $A_i$ is either semisimple or unipotent.
 The previous result shows that this implies the result for arbitrary $A_i$. 
\end{proof}

\begin{cor} \label{c:2-space}   Let $H$ be a closed irreducible subgroup of $\GL_n(k)$ that has a dense
orbit $\mathcal{O}$ on $\mathcal{G}_2$.   Let $x_1, \ldots, x_s \in H$ with $\sum d(x_i) \le (s-1)n$.   Assume moreover
that either $s > 2$ or $s=2$ and $x_1$ does not have a quadratic minimal polynomial.  Then
for generic $h_i \in H$,  $\langle x_1^{h_1}, \ldots, x_s^{h_s} \rangle$ do not fix a point of $\mathcal{O}$.
\end{cor}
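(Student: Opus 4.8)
The plan is to derive Corollary~\ref{c:2-space} from Theorem~\ref{t:sum} applied with $e=2$, using a dimension count on the incidence variety of configurations that fix a point of the dense orbit $\mathcal{O}$. First I would set up the relevant morphism: consider the variety
\[
Z = \{(h_1, \ldots, h_s, W) \in H^s \times \mathcal{O} : x_i^{h_i} W \subseteq W \text{ for all } i\},
\]
and let $\pi \colon Z \to H^s$ be the projection. The goal is to show $\pi$ is not dominant, which (since $H^s$ is irreducible) means its image lies in a proper closed subset, and hence for generic $(h_1,\ldots,h_s)$ the group $\langle x_1^{h_1},\ldots,x_s^{h_s}\rangle$ fixes no point of $\mathcal{O}$. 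To bound $\dim Z$, I would instead use the other projection $q \colon Z \to \mathcal{O}$. Because $H$ acts transitively on $\mathcal{O}$, all fibers of $q$ are isomorphic, so $\dim Z = \dim \mathcal{O} + \dim q^{-1}(W_0)$ for a fixed $W_0 \in \mathcal{O}$.

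The key step is computing $\dim q^{-1}(W_0)$. Given $W_0$, the fiber consists of tuples $(h_1, \ldots, h_s)$ with $h_i^{-1} W_0$ an $x_i$-invariant subspace, i.e. $h_i^{-1} W_0 \in \mathcal{G}_2^{x_i}$. Writing $H_{W_0} = \mathrm{Stab}_H(W_0)$, the set of $h_i$ with $h_i^{-1}W_0 \in \mathcal{G}_2^{x_i}$ is a union of $H_{W_0}$-cosets lying over the intersection of the $H$-orbit map image with $\mathcal{G}_2^{x_i}$; its dimension is at most $\dim H_{W_0} + \dim(\mathcal{G}_2^{x_i} \cap \overline{\mathcal{O}})$, and since $\mathcal{O}$ is dense in $\mathcal{G}_2$ the relevant bound is $\dim H_{W_0} + \dim \mathcal{G}_2^{x_i}$ (a component of $\mathcal{G}_2^{x_i}$ contributing meaningfully must meet the dense orbit, but in fact crude bounding by $\dim \mathcal{G}_2^{x_i}$ always suffices). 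Summing over $i$ and using $\dim H_{W_0} = \dim H - \dim \mathcal{O}$,
\[
\dim q^{-1}(W_0) \le \sum_{i=1}^s \bigl(\dim H - \dim \mathcal{O} + \dim \mathcal{G}_2^{x_i}\bigr) = s\dim H - s\dim\mathcal{O} + \sum_i \dim \mathcal{G}_2^{x_i}.
\]
Hence $\dim Z \le s\dim H - (s-1)\dim\mathcal{O} + \sum_i \dim\mathcal{G}_2^{x_i}$. By Theorem~\ref{t:sum}, the hypothesis $\sum d(x_i) \le (s-1)n$ together with the exclusion of case (1) gives $\sum_i \dim\mathcal{G}_2^{x_i} < (s-1)\dim\mathcal{G}_2 = (s-1)\dim\mathcal{O}$, so $\dim Z < s\dim H = \dim H^s$. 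Therefore $\pi$ cannot be dominant, completing the proof.

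The main obstacle I anticipate is making the fiber-dimension estimate for $q$ fully rigorous: one must be careful that the set $\{h \in H : h^{-1}W_0 \in \mathcal{G}_2^{x}\}$ really has dimension at most $\dim H_{W_0} + \dim\mathcal{G}_2^{x}$. This follows by viewing it as the preimage under the orbit morphism $H \to \mathcal{O}$, $h \mapsto h^{-1}W_0$, of the closed set $\mathcal{G}_2^{x} \cap \mathcal{O}$, whose fibers all have dimension $\dim H_{W_0}$; the fiber-dimension theorem for morphisms of varieties then gives the bound, with the caveat that one works component by component and uses that $\mathcal{O}$ is irreducible. A secondary point is the passage from "$\pi$ not dominant" to the stated genericity conclusion: since $H^s$ is irreducible and $\overline{\pi(Z)}$ is a proper closed subvariety, its complement is a nonempty open (hence dense) set of tuples $(h_1,\ldots,h_s)$ for which no $W \in \mathcal{O}$ is simultaneously fixed, which is exactly the assertion. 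Everything else is bookkeeping, and the quadratic-minimal-polynomial exception is inherited verbatim from Theorem~\ref{t:sum}.
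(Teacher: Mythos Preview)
Your argument is correct and follows the same underlying strategy as the paper: invoke Theorem~\ref{t:sum} with $e=2$ to get the strict inequality $\sum_i \dim \mathcal{G}_2^{x_i} < (s-1)\dim \mathcal{G}_2$, then use a dimension count on the incidence variety to conclude that the locus of tuples with a common fixed $2$-space in $\mathcal{O}$ is not dense. The difference is only one of packaging. The paper's proof is two lines because it outsources the entire incidence/dimension argument to \cite[Lemma~3.14]{BGG}, applied to $H$ acting on $\mathcal{O}$ and to the $H$-classes $C_i$ of the $x_i$; you have instead written out that lemma's content directly, working with $H^s$ in place of $C_1\times\cdots\times C_s$ and computing the fiber of $q\colon Z\to\mathcal{O}$ explicitly. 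Your version is self-contained and makes transparent exactly where the density of $\mathcal{O}$ (so that $\dim\mathcal{O}=\dim\mathcal{G}_2$) and the transitivity of $H$ on $\mathcal{O}$ (so that all $q$-fibers are isomorphic) enter, at the cost of a paragraph of bookkeeping that the paper avoids by citation. The two are mathematically the same argument.
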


\begin{proof}   It follows by the previous result that Theorem \ref{t:sum}(2) holds.   
Let $C_i$ be the $H$-conjugacy class of $x_i$.  
By considering $H$ acting on $\mathcal{O}$, 
it follows by \cite[Lemma 3.14]{BGG} that the subset of $C_1 \times C_2 \times \ldots \times C_s$
that have a fixed point on $\mathcal{O}$ is contained in a proper closed subvariety of 
$C_1 \times C_2 \times \ldots \times C_s$.
\end{proof}  

Note that this result holds for $H$ a symplectic or special  orthogonal group acting on nondegenerate 
$2$-spaces.   

There should be an analogous but more complicated result both for higher dimensional Grassmanians
and also for actions on the variety of totally singular spaces of a given dimension. 

We note that one can generalize Dixmier's result for arbitrary elements in the  symplectic and orthogonal Lie algebras in characteristic not $2$.
Indeed, any element that does not have $0$ as an eigenvalue is in the Zariski closure of the set of semisimple elements with the same
centralizer dimension.   There is a similar statement for the groups.   In good characteristic, Diximier's results still holds for parabolic nilpotent and unipotent
elements. 
  We do not require this for our application (even for groups of this type) since
these groups have dense open orbits on the Grassmanians (indeed, they have only finitely many orbits on each Grassmanian) and so
the result for $\gl$ is sufficient. 

\section{Declarations}

The authors declare no competing interests.  Both authors contributed equally.

\end{document}